\newtheorem{theorem}{Theorem}[section]
\newtheorem{corollary}[theorem]{Corollary}
\def\P{\mathbb{P}}
\def\R{\mathbb{R}}
\begin{document}
\title{A note on the local weak limit of a sequence of expander graphs}

\author{
Sourav Sarkar
\thanks{Department of Mathematics, University of Toronto. Email: sourav.sarkar@utoronto.ca}
}

\date{}
\maketitle

\date{}
\maketitle
           
\begin{abstract}We show that the local weak limit of a sequence of finite expander graphs with uniformly bounded degree is an ergodic (or extremal) unimodular random graph. In particular, the critical probability of percolation of the limiting random graph is constant almost surely. As a corollary, we obtain an improvement to a theorem by Benjamini-Nachmias-Peres (2011) in \cite{BNP} on locality of percolation probability for finite expander graphs with uniformly bounded degree where we can drop the assumption that the limit is a single rooted graph.
\end{abstract}
\section{Introduction}
Local weak convergence of a sequence of finite expander graphs has been studied in relation to locality of critical probability of percolation in \cite{BNP}. Locality of critical probability for Bernoulli bond percolation on infinite graphs is in itself a very significant and well studied problem, beginning with Schramm's locality conjecture for transitive graphs; see \cite{MT} and  \cite{BPT} and the recent paper \cite{T}  and the references therein for some recent developments in this direction for infinite graphs. For finite graphs, such locality was shown to hold when the graphs were expanders and the sequence of graphs converged locally weakly to a fixed infinite rooted graph in \cite{BNP}. Their argument, in turn, was an extension of the arguments used in \cite{ABS}, who studied critical probability for the emergence of a giant component (that is a connected linear sized component) in finite expander graphs under the
assumptions of regularity and high-girth. In \cite{BBLR} it was shown that in any expander, every giant component of given proportion emerges
in an interval of length $o(1)$, that is,  for any $c \in (0,1)$, the property that the random subgraph of an expander $G=(V,E)$ after Bernoulli bond percolation contains a giant component of size $c|V|$, has a sharp threshold; removing
the regularity and high-girth assumptions in \cite{ABS}.

We first recall local weak convergence of bounded degree finite graphs. A rooted graph is denoted as $(G,o)$, where $G$ is a (connected) graph
and $o$ is a vertex in $G$. A rooted graph $(G,o)$ is isomorphic to $(G',o')$, written as $(G,o)\cong(G',o')$ if
there is an isomorphism of $G$ onto $G'$ which takes $o$ to $o'$. Let $\mathcal{G}$ denote the space of isomorphism classes of all rooted connected graphs with degrees bounded by $\Delta$, for some fixed $\Delta>0$.  For $r=1,2,\ldots$ and a graph $G$, let $B_G(o,r)$ denote the closed ball of radius $r$ around the vertex $o$ in the graph $G$. Define the distance between two isomorphic classes of rooted graphs $(G_1,o_1)$ and $(G_2,o_2)$ as $D((G_1,o_1),(G_2,o_2))=\frac{1}{1+t}$ where $t=\sup\{s:B_{G_1}(o_1,s)\cong B_{G_2}(o_2,s) \}$. The space $(\mathcal{G},D)$ is a compact and separable metric space.  

For each
$n \geq 1$, let $G_n$ be a finite graph and let $U_n$ be a uniformly chosen random vertex in $G_n$ and $(G, \rho)$ be an infinite random rooted
graph in $\mathcal{G}$, that is, $(G,\rho)$ is a sample from a Borel probability measure $\mu$ on $\mathcal{G}$.
We say that the sequence of finite graphs $\{G_n\}$ \textit{converges locally weakly} to $(G, \rho)$ (or to $\mu$) if
for every $R > 0$ and for every finite rooted graph $(H, \rho')$, 
\[\P\left((B_{G_n} (U_n , R), U_n\right)\cong (H,\rho'))\rightarrow
\P\left((B_G(\rho, R), \rho)\cong (H,\rho')\right)\mbox{\ \ \ as } n\rightarrow \infty\,.\]
 This is equivalent to saying that $\mu$ is the weak limit of the
laws of $(G_n , U_n)$. This is a special case of the graph limits defined in \cite{BS}. Such measures $\mu$ satisfy a ``spatial stationarity" property called \textit{unimodularity}, which roughly says that if mass is redistributed in the graph,
then the expected mass that leaves the root is equal to the expected mass the arrives at the root, see Definition $2.1$ of \cite{AL} for an exact definition.

Also, we shall denote by $C((G,o),\frac{1}{1+s})$ the $\frac{1}{1+s}$ neighborhood of the graph $(G,o)$ in the space $(\mathcal{G},D)$, that is, the set of all rooted graphs whose $s$-balls are isomorphic to $(B_G(o,s),o)$.

Let $\mathcal I$ denote the $\sigma$-field of events in the Borel $\sigma$-field of $\mathcal G$ that are invariant under non-rooted isomorphisms. The class $\mathcal U$ of unimodular probability measures on $\mathcal G$ is convex. An element of $\mathcal U$ is
called \textbf{extremal} if it cannot be written as a convex combination of other elements of $\mathcal U$. It follows from Theorem $4.7$ of \cite{AL} that a unimodular probability measure $\mu$ on $\mathcal G$ is extremal if and only if $\mathcal I$ is $\mu$-trivial, that is, $\mathcal I$ contains only sets of $\mu$-measure $0$ or $1$ (ergodicity).

For two sets of vertices $A$ and $B$,
we shall write $E(A, B)$ for the set of edges with one endpoint in $A$ and the other in $B$. Recall
that the Cheeger constant or the edge-isoperimetric number $h(G)$ of a finite graph $G = (V, E)$ is defined by
\[h(G) = \min_{
A\subseteq V}\left\{\frac{
|E(A, V \setminus A)|}{
|A|}
: 0 < |A| \leq  |V|/2
\right\}\,.\]

Now, we recall Theorem $1.3$ of \cite{BNP} here. Let $(G, \rho)$ be a \textbf{fixed} infinite bounded degree rooted graph and $p_c(G):=\inf\{p\in [0,1]:\P_p(\exists \mbox{ an infinite open component})>0\}$ where $\P_p$ denotes Bernoulli bond percolation with probability $p$, that is, each edge is independently open with probability $p$ and closed with probability $1-p$. Let $G_n$ be
a sequence of finite graphs with a uniform Cheeger constant lower bound $c > 0$ and a
uniform degree bound $\Delta$, such that $G_n \rightarrow (G,\rho)$ locally weakly. Let $p \in [0, 1]$ and write $G_n(p)$
for the graph of open edges obtained from $G_n$ by performing bond percolation with
parameter $p$. If $p < p_c(G)$, then for any constant $\alpha > 0$ we have
\[\P (G_n(p) \mbox{ contains a component of size at least } \alpha|G_n|) \rightarrow 0 \mbox{ \ \ as } n\rightarrow \infty,\]
and if $p > p_c(G)$, then there exists some $\alpha = \alpha(p) > 0$ such that
\[\P (G_n(p) \mbox{ contains a component of size at least } \alpha|G_n|) \rightarrow 1 \mbox{ \ \ as } n\rightarrow \infty\,.\]

In this short paper, we first show that when a sequence of finite expander graphs converges locally weakly to a random graph $G$, then $G$ is an ergodic (or extremal) unimodular random graph. Any measurable function $f: \mathcal{G}\mapsto \R$ is called rerooting-invariant if it is invariant under changes in the position of the root (that is, for any graph $\tau$ and any two vertices $v,v'\in \tau$, $f((\tau,v))=f((\tau,v'))$; we sometimes suppress the root in the notation and simply denote it as $f(\tau)$). We have the following theorem.
\begin{theorem}\label{t} 
Let $G_n$ be a sequence of finite graphs with a uniform Cheeger constant lower bound $c>0$ and a uniform degree bound $\Delta>0$, such that $G_n\rightarrow (G,\rho)$ locally weakly, where $(G,\rho)$ is a random infinite rooted graph. Then $G$ is an ergodic (or extremal) unimodular random graph. That is, if $f$ is any rerooting-invariant function, then $f(G)$ is constant almost surely. In particular,  $p_c(G)$ is constant almost surely.

\end{theorem}

As a corollary to the above theorem, we get the following improvement to Theorem 1.3 of \cite{BNP}.
\begin{corollary}\label{c}Let $G_n$ be a sequence of finite expander graphs with uniformly bounded degree as above, such that $G_n$ converges locally weakly to an infinite \textbf{random} rooted graph $(G,\rho)$. If $p < p_c(G)$ (this is well defined as $p_c(G)$ is constant by Theorem \ref{t}), then for any constant $\alpha > 0$ we have
\[\P (G_n(p) \mbox{ contains a component of size at least } \alpha|G_n|) \rightarrow 0 \mbox{ \ \ as } n\rightarrow \infty,\]
and if $p > p_c(G)$, then there exists some $\alpha = \alpha(p) > 0$ such that
\[\P (G_n(p) \mbox{ contains a component of size at least } \alpha|G_n|) \rightarrow 1 \mbox{ \ \ as } n\rightarrow \infty\,.\]
\end{corollary}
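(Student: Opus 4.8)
The plan is to use Theorem \ref{t} to reduce the corollary to a re-reading of the proof of Theorem 1.3 of \cite{BNP}, the point being that that proof invokes the limit only through \emph{annealed} cluster-size statistics, which behave identically whether the limit is a point mass or a genuine law $\mu$. By Theorem \ref{t} fix the constant $p_c^\ast$ with $p_c(G)=p_c^\ast$ $\mu$-almost surely, so that the conditions $p<p_c(G)$ and $p>p_c(G)$ are unambiguous. For $K\ge1$ set $q_K(p):=\P\big(|\cC(\rho)|\ge K\big)$, the annealed probability that the root cluster of $G(p)$ has at least $K$ vertices (expectation over $(G,\rho)\sim\mu$ and then the percolation), and $\theta(p):=\lim_{K\to\infty}q_K(p)=\P(|\cC(\rho)|=\infty)$. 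Two facts then follow from Theorem \ref{t}: if $p<p_c^\ast$ then $\mu$-a.s. $p<p_c(G)$, so there is no infinite cluster and $\theta(p)=0$; if $p>p_c^\ast$ then $\mu$-a.s. $p>p_c(G)$ on a connected infinite graph, whence a standard FKG argument gives $\P_p^{G}(\rho\leftrightarrow\infty)>0$ for a.e. realization and hence $\theta(p)>0$ after integrating over $\mu$. These are exactly the inputs that \cite{BNP} extract from their fixed limit.

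Next I would transfer cluster sizes to $G_n$. Since percolation keeps edges independently, $G_n(p)$ converges locally weakly to $G(p)$, and for each fixed $m$ the event $\{|\cC(\cdot)|=m\}$ depends only on a bounded neighbourhood of the root; summing over $m<K$ and complementing gives $\P\big(|\cC(U_n)|\ge K \text{ in } G_n(p)\big)\to q_K(p)$, which is also the expected fraction of vertices of $G_n$ lying in open clusters of size at least $K$. In the subcritical regime $p<p_c^\ast$ this already finishes the argument, with no use of expansion: if $G_n(p)$ has a component of size $\ge\alpha|G_n|$, then once $|G_n|\ge K/\alpha$ at least $\alpha|G_n|$ vertices sit in clusters of size $\ge K$, so $\alpha\,\P\big(G_n(p)\text{ has a component of size}\ge\alpha|G_n|\big)\le\P(|\cC(U_n)|\ge K)$; letting $n\to\infty$ and then $K\to\infty$ sends the right-hand side to $\theta(p)=0$.

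For the supercritical regime $p>p_c^\ast$ I would invoke the sprinkling and expansion argument of \cite{ABS} and \cite{BNP} essentially unchanged. Fix $p_1\in(p_c^\ast,p)$ and realise $p$-percolation as the union of independent $p_1$- and $p_2$-percolations. By the previous two steps the expected fraction of vertices of $G_n$ in $p_1$-clusters of size $\ge K$ is at least $\theta(p_1)>0$ for every $K$; the concentration estimate of \cite{ABS} turns this into a high-probability statement, so with probability tending to $1$ a set $W$ of at least $\delta|G_n|$ vertices (with $\delta=\theta(p_1)/2>0$) lies in $p_1$-clusters of size $\ge K$, and there are at most $|G_n|/K$ such clusters. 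The uniform Cheeger bound $h(G_n)\ge c$ forces many edges across every balanced cut, and sprinkling the $p_2$-edges opens enough of them that these clusters are absorbed into a single component of linear size; the key point is that, because only $\le|G_n|/K$ clusters must be tracked, a union bound over cluster-level cuts costs only $2^{|G_n|/K}$, which is beaten by the $(1-p_2)^{\Omega(|G_n|)}$ factor coming from expansion once $K=K(c,\Delta,p_2,\delta)$ is large. This yields a component of size $\ge\alpha|G_n|$ with probability tending to $1$.

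The main obstacle I anticipate is conceptual rather than computational: one must verify that the \cite{BNP} proof touches the limit only through the annealed quantities $q_K$ and $\theta$, so that Theorem \ref{t} is the sole new ingredient needed to discard the single-rooted-graph hypothesis. The genuinely technical inputs in the supercritical direction — the concentration of the number of vertices in large clusters, and the passage from a cluster-level $2$-colouring to an honest cut to which $h(G_n)\ge c$ applies — are unchanged from \cite{ABS,BNP} and are routine given the uniform degree and Cheeger bounds.
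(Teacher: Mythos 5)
Your proposal is correct and follows essentially the same route as the paper: use Theorem \ref{t} to make $p_c(G)$ well defined, convert the hypotheses $p<p_c(G)$ and $p>p_c(G)$ into annealed bounds on root-cluster statistics of the limit, transfer these to $G_n$ via local weak convergence, and then invoke the expander/sprinkling machinery of \cite{ABS,BNP} unchanged. The only (harmless) difference is in the subcritical step, where you obtain the vanishing annealed bound from the monotone limit $q_K(p)\downarrow\theta(p)=0$, whereas the paper instead extracts a single uniform radius $R$ with small escape probability on a set of $\mu$-measure at least $1-\varepsilon$ via a countable-subcover argument.
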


\section{Proofs}

We first prove Theorem \ref{t}. 
\begin{proof}[Proof of Theorem \ref{t}] Without loss of generality we assume that $|G_n|=n$. Let the law of the limiting random rooted graph $(G,\rho)$ be denoted by $\mu$ and $f$ be any rerooting-invariant function.
We show that for any two rational numbers $a,b$ such that $0\leq a<b\leq 1$, if 
\[\Gamma_1=\{(\tau,r)\in \mathcal{G}: f(\tau)\leq a\}, \ \ \ \Gamma_2=\{(\tau,r)\in \mathcal{G}: f(\tau)\geq b\}\,,\]
then $\mu(\Gamma_1)$ and $\mu(\Gamma_2)$ cannot be both positive. Clearly, this is enough to prove the theorem. We prove this by contradiction. Let $a,b$ be rational numbers, $p_0>0$ be some real number and $\Gamma_i$'s be defined as above, such that $\mu(\Gamma_i)\geq p_0$ for $i=1,2$. Now, as $(\mathcal{G,D})$ is a compact metric space, and $\mu$ is a probability measure, hence $\mu$ is \textit{regular} (see, for example, \cite{KRP} Chapter II, Theorem $1.2$). Hence there exist compact sets $H_i\subseteq \Gamma_i$ such that $\mu(H_i)\geq p_0/2$. Fix
\[K=\frac{4\Delta}{cp_0}.\]

For any graph $(\tau,r)\in \mathcal{G}$, let $[\tau]=\{(\tau,r'):r'\in \tau\}$ denote the unrooted version of $(\tau,r)$. Now, for any fixed $(\tau_i,r_i)\in H_i$, since $f((\tau_1,r_1))\neq f((\tau_2,r_2))$, hence $[\tau_1]\neq [\tau_2]$. Define
\begin{eqnarray*}
R_{(\tau_1,r_1),(\tau_2,r_2)}:=\max_{u_i\in B_{\tau_i}(r_i,K)} \{R_{u_1,u_2}: R_{u_1,u_2} \mbox{ is the smallest $s\in \{1,2,\ldots\}$ for which } \\ B_{\tau_1}(u_1,s)\not \cong B_{\tau_2}(u_2,s)\}.
\end{eqnarray*}
Since $[\tau_1]\neq [\tau_2]$, $R_{u_1,u_2}<\infty$. As $(\tau_i,r_i)$ have uniformly bounded degree, the maximum is taken over a finite set, so that $R_{(\tau_1,r_1),(\tau_2,r_2)}<\infty$.

Consider the neighborhood $C((\tau_2,r_2),\frac{1}{1+R_{(\tau_1,r_1),(\tau_2,r_2)}+K})$, that is, the set of all rooted graphs whose $(R_{(\tau_1,r_1),(\tau_2,r_2)}+K)$-balls are isomorphic to $(B_{\tau_2}(r_2,R_{(\tau_1,r_1),(\tau_2,r_2)}+K),r_2)$. Further, consider \[\mathfrak C:=\bigcup_{(\tau_2,r_2)\in H_2}C\left((\tau_2,r_2),\frac{1}{1+R_{(\tau_1,r_1),(\tau_2,r_2)}+K}\right)\,.\]
 Since $\mathfrak C$ is an open cover of $H_2$ and $H_2$ is compact, it has a finite subcover, that is, for some $M>0$,
\[H_2\subseteq \bigcup_{i=1}^M C\left((\tau_{2,i},r_{2,i}),\frac{1}{1+R_{(\tau_1,r_1),(\tau_{2,i},r_{2,i})}+K}\right)\,.\] 
  Let \[R_{(\tau_1,r_1)}=\max \left\{R_{(\tau_1,r_1),(\tau_{2,i},r_{2,i})}+K:i\in \{1,2,\ldots,M\}\right\}\] over this finite subcover. 
  
  Moreover, as 
  \[\bigcup_{(\tau_1,r_1)\in H_1}C\left((\tau_1,r_1),\frac{1}{1+R_{(\tau_1,r_1)}}\right)\]
   is an open cover of $H_1$ and $H_1$ is compact, it has a finite subcover, and let $R=\max_i R_{(\tau_{1,i},r_{1,i})}$ be the maximum as $\{(\tau_{1,i},r_{1,i})\}$ range over this finite subcover. Then for all $(\tau_i,r_i)\in H_i$, 
\[B_{\tau_1}(u_1,R)\neq B_{\tau_2}(u_2,R) \quad \forall u_i\in B_{\tau_i}(r_i,K) \mbox{ for } i=1,2.\]

Define for $i=1,2$ and $G_n=(V_n,E_n)$,
\[A_{i,n}=\{v\in V_n: B_{G_n}(v,R+K)\cong B_{\tau_i}(v_i,R+K) \mbox{ for some } (\tau_i,v_i)\in H_i\}.\]

Choose and fix $n$ large enough such that $|A_{i,n}|\geq \frac{np_0}{4}$ and let $A_i=A_{i,n}$.

Now, recall the following theorem due to Menger (1927). \textit{Let $G=(V,E)$ be a graph and $A,B\subseteq V$. Then the maximum number of edge disjoint $A-\mbox{to}-B$ paths equals the minimum size of an $A-\mbox{to}-B$ separating edge cut.} 

Let $L$ denote the maximum number of edge disjoint paths between $A_1$ and $A_2$, and $E(S,S^c)$ denote the number of edges between $S$ and $S^c$. Applying Menger's theorem with the sets $A_1,A_2$, and using $|A_i|\geq np_0/4$, and the fact that the Cheeger constant is bounded below by $c>0$ from the assumption of our theorem, we get
\[\frac{L}{np_0/4}=\min_{S\subseteq V, A_1\subseteq S, A_2\subseteq S^c}\frac{E(S,S^c)}{np_0/4}\geq \min_{S\subseteq V, A_1\subseteq S, A_2\subseteq S^c}\frac{E(S,S^c)}{\min(|S|,|S^c|)}\geq \min_{S\subseteq V}\frac{E(S,S^c)}{\min(|S|,|S^c|)}\geq c.\]

Hence, there are at least $cnp_0/4$ edge-disjoint paths from $A_1$ to $A_2$. Since there are at most $\Delta n/2$ edges in $G$, at least half of these paths, i.e., at least $cnp_0/8$ edge-disjoint paths between $A_1$ and $A_2$ have length at most $K$ each. Take two vertices $v_i\in A_i$ that are incident on such a path of length at most $K$. Since \[B(v_2,R)\subseteq B(v_1,R+K)\cong B_{\tau_1}(r_1,R+K)\] 
for some $(\tau_1,r_1)\in H_1$, and \[B(v_2,R)\cong B_{\tau_2}(r_2,R)\] for some $(\tau_2,r_2)\in H_2$; hence $B_{\tau_1}(u_1,R)\cong B_{\tau_2}(r_2,R)$ for some $u_1$ such that $d(r_1,u_1)\leq K$. This contradicts the choice of $R$. This completes the proof.
\end{proof}

Finally we end this section by proving Corollary \ref{c}.
\begin{proof}[Proof of Corollary \ref{c}] We first prove the case when $p<p_c(G)$. Fix any $\varepsilon>0$. Let $H\subseteq \mathcal{G}$ be the set of all rooted graphs with percolation critical probability equal to $p_c(G)$. For every $(\tau,v) \in H$, as $p<p_c(\tau)$, there exists $R_{(\tau,v)}=R_{(\tau,v)}(\varepsilon)$ large enough, such that,
\begin{equation}\label{e:bdrcon}
\P_p(v \leftrightarrow \partial B_\tau(v,R_{(\tau,v)}))<\varepsilon.
\end{equation}
Consider the neighborhood $C((\tau,v),\frac{1}{1+R_{(\tau,v)}})$. Then for all graphs in this neighborhood, their $R_{(\tau,v)}$ balls around the root are isomorphic to $B_\tau(v,R_{(\tau,v)})$, and hence the equation \eqref{e:bdrcon} holds for all such graphs. Consider $\cup_{(\tau,v)\in H}C((\tau,v),\frac{1}{1+R_{(\tau,v)}})$, an open cover of $H$. Also $H$ is second countable (since $\mathcal{G}$ is). Hence $H$ admits a countable subcover, say $C_1,C_2,\ldots$, where $C_i=C((\tau_i,v_i),\frac{1}{1+R_{(\tau_i,v_i)}})$ for some $(\tau_i,v_i)\in H$. Let $H'=\cup_{i=1}^M C_i$ for some large integer $M$ be such that $\mu(H')\geq 1-\varepsilon$, where $\mu$ is the law of the limiting random rooted graph. Let $R=\max_{i=1,2,\ldots,M}R_{(\tau_i,v_i)}$. Then for all $(\tau,v)\in H'$,
\[\P_p(v \leftrightarrow \partial B_\tau(v,R))<\varepsilon.\]

Now as $G_n$ converges locally weakly to $G$, choose $n$ large enough such that, if $F$ denotes the finite set of all rooted graphs of radius $R$ and degree bounded by $\Delta$, then,
\begin{equation}\label{eq:sumlim}
\sum_{(\tau,v)\in F}\left|\mathcal{L}(B_{G_n}(U_n,R)=(\tau,v))-\P(B_G(\rho,R)=(\tau,v))\right|\leq \varepsilon\,,
\end{equation}
where $\mathcal{L}$ is the law of $U_n$.
Hence,
\begin{eqnarray}\label{eq:upbnd}
&&\mathcal{L}\times \P_p(U_n\leftrightarrow \partial B_{G_n}(U_n,R))\\
&=&\sum_{(\tau,v)\in F}\mathcal{L}\times \P_p\Big(U_n\leftrightarrow \partial B_{G_n}(U_n,R)\big|B_{G_n}(U_n,R)=(\tau,v)\Big)\mathcal{L}(B_{G_n}(U_n,R)=(\tau,v))\nonumber\\
&=&\sum_{(\tau,v)\in F} \P_p(v\leftrightarrow \partial B_\tau(v,R))\mathcal{L}(B_{G_n}(U_n,R)=(\tau,v))\nonumber\\
&\leq & \sum_{(\tau,v)\in F} \P_p(v\leftrightarrow \partial B_{\tau}(v,R))\P(B_G(\rho,R)=(\tau,v))+\varepsilon\nonumber\\
&\leq &\sum_{B_\tau(v,R):(\tau,v)\in H'} \P_p(v\leftrightarrow \partial B_{\tau}(v,R))\P(B_G(\rho,R)=B_\tau(v,R))+2\varepsilon\nonumber\\
&\leq & \varepsilon \mu(H')+2\varepsilon\leq 3\varepsilon\,,\nonumber
\end{eqnarray}
where the inequality in the fourth line follows from \eqref{eq:sumlim}.
Next, following the arguments in the proof of Theorem $1.3$ in \cite{BNP} verbatim, it follows that 
\[\P_p(|C_1(n)|\geq \alpha n)\leq 3\varepsilon\alpha^{-1},\]
where $C_1(n)$ is the largest component of $G_n(p)$, which proves the first assertion of the corollary.

Now we prove the case when $p>p_c(G)$. As in the proof of Theorem $1.3$ of \cite{BNP} for this case, fix some $\varepsilon>0$, let $p_1>p_c$ such that $1-p=(1-p_1)(1-\varepsilon)$, and consider $G_n(p_1)$. As $p_1>p_c(G)$, for all $(\tau,v)\in H$, the set of all rooted graphs with percolation probability equal to $p_c(G)$, we have,
\[f((\tau,v)):=\P_{p_1}(v\leftrightarrow\infty):=\inf_{R}\P_{p_1}(v\leftrightarrow\partial B_\tau(v,R))>0.\]
As $\mu\{\cup_{m=1}^\infty\{(\tau,v)\in H:f((\tau,v))>\frac{1}{m}\}\}=\mu\{(\tau,v)\in H:f((\tau,v))>0\}=1$, hence there exists some $\delta>0$  and $0<\eta\leq 1$ such that
\[\mu\{(\tau,v)\in H:f((\tau,v))>\delta\}\geq \eta>0.\]
Let $H''\subseteq H$ be the set of all rooted graphs $(\tau,v)$ such that $f((\tau,v))>\delta$. Then $\mu(H'')\geq \eta >0$. 

Fix $R>(\varepsilon^{\frac{12\Delta}{c\eta\delta}}c\eta\delta/24)^{-1}$ as in the proof of Theorem $1.3$ of \cite{BNP}. Then for this $R$, there exists $n_0$, such that for $n\geq n_0$, we have in $G_n$, (using \eqref{eq:sumlim} and setting $\varepsilon=\delta\eta/2$)
\begin{eqnarray*}
&&\mathcal{L}\times \P_{p_1}(U_n\leftrightarrow \partial B_{G_n}(U_n,R))\\
&\geq& \sum_{B_\tau(v,R):(\tau,v)\in H} \P_{p_1}(v\leftrightarrow \partial B_{\tau}(v,R))\P(B_G(\rho,R)=B_\tau(v,R))-\delta\eta/2\\
&\geq &\sum_{B_\tau(v,R):(\tau,v)\in H''} \P_{p_1}(v\leftrightarrow \partial B_{\tau}(v,R))\P(B_G(\rho,R)=B_\tau(v,R))-\delta\eta/2\\
&\geq & \delta \mu(H'')-\delta\eta/2\geq \delta\eta-\delta\eta/2=\delta\eta/2.
\end{eqnarray*}
For $v\in G_n$, let $B'_{p_1}(v,R)$ denote the set of vertices in $G_n(p_1)$ which are connected to $v$ in a $p_1$-open path of length at most $R$. Thus for all $n\geq n_0$,
\[\mathcal{L}\times \P(|B'_{p_1}(U_n,R)|\geq R)\geq\mathcal{L}\times \P_{p_1}(U_n\leftrightarrow \partial B_{G_n}(U_n,R))\geq \delta\eta/2. \] 
Next following the arguments in the proof of Theorem $1.3$ of \cite{BNP} verbatim, the lemma follows.
\end{proof}

\section{Acknowledgement} This work was completed when the author was a graduate student at UC Berkeley and was supported by the Lo\`{e}ve Fellowship, which he gratefully acknowledges. The author thanks Nike Sun for suggesting this problem and for some helpful discussions. He also thanks Tom Hutchcroft for very helpful feedback on an earlier version of the paper. The author thanks the anonymous referee whose careful reading and detailed comments helped improve the paper.

\bibliography{Exp}
\bibliographystyle{amsplain}

\end{document}